\newtheorem{theorem}{Theorem}[section]
\newtheorem{lemma}[theorem]{Lemma}
\newtheorem{proposition}[theorem]{Proposition}
\newtheorem{remark}[theorem]{Remark}
\begin{document}
\setcounter{page}{1}
\title{A new way to Dirichlet problems  for minimal surface systems in arbitrary dimensions and codimensions}
\author{Jing Mao}
\date{}
\protect\footnotetext{\!\!\!\!\!\!\!\!\!\!\!\!{ MSC 2010:
53C21;53C44;35K55}
\\
{ ~~Key Words: Dirichlet problem; Spacelike submanifold; Mean
curvature flow; Maximum principle. } }
\maketitle ~~~\\[-15mm]
\begin{center}{\footnotesize  Department of Mathematics, Harbin Institute
of Technology (Weihai), Weihai, 264209, China \\
jiner120@163.com, jiner120@tom.com}
\end{center}

\begin{abstract}
 In this paper, by considering a special case of the spacelike
mean curvature flow investigated by Li and Salavessa \cite{gi}, we
get a condition for the existence of smooth solutions of the
Dirichlet problem for the minimal surface equation in arbitrary
codimension. We also show that our condition is sharper than Wang's
in \cite[Theorem 1.1]{m1} provided the hyperbolic angle $\theta$ of
the initial spacelike submanifold $M_{0}$ satisfies
$\max_{M_{0}}{\rm cosh}\theta>\sqrt{2}$.
\end{abstract}

\markright{\sl\hfill  J. Mao \hfill}

\section{Introduction}
\renewcommand{\thesection}{\arabic{section}}
\renewcommand{\theequation}{\thesection.\arabic{equation}}
\setcounter{equation}{0} \setcounter{maintheorem}{0}

Let $\Omega$ be a bounded $C^{2}$ domain in the Euclidean $n$-space
$\mathbb{R}^{n}$ and $\phi:\partial\Omega\rightarrow\mathbb{R}^{m}$
be a continuous map from the boundary of $\Omega$ to
$\mathbb{R}^{m}$. The Dirichlet problem for the minimal surface
system asks whether there exists a Lipschitz map
$f:\Omega\rightarrow\mathbb{R}^{m}$ such that the graph of $f$ is a
minimal submanifold in $\mathbb{R}^{n+m}$ and
$f|_{\partial\Omega}=\phi$. For $m=1$ and any mean convex domain
$\Omega$, Jenkins and Serrin \cite{hj} proved the existence of the
solutions for this Dirichlet problem and the smoothness of all of
the solutions. The Dirichlet problem is well understood owing to the
pioneering works of Jenkins and Serrin \cite{hj}, De Giorgi
\cite{ed}, and Moser \cite{j}. However, they treated the Dirichlet
problem for just hypersurfaces. For surfaces with higher
codimension, very little is known. \emph{Is the high codimensional
Dirichlet problem solvable, or under what kind of assumptions could
one obtain the existence of solutions of this problem?} Lawson and
Osserman \cite{hbr} gave some nice examples to show how important
 the boundary data is for the solvability of high codimensional
Dirichlet problems. If the minimal submanifold is additionally
required to be Lagrangian, the minimal surface system becomes a
fully nonlinear scalar equation
\begin{eqnarray*}
\mathrm{Im}\left(\det(I+\sqrt{-1}D^{2}f)\right)=0,
\end{eqnarray*}
where $I$ is the identity matrix and
$D^{2}f=\left(\frac{\partial^{2}f}{\partial{x^ i}\partial{x^
j}}\right)$ is the Hessian matrix of $f$. Caffarelli, Nirenberg, and
Spruck \cite{jll} solved this Dirichlet problem with the prescribed
boundary value of $f$. For $n=2$ and any convex planar domain, the
existence of solutions of this problem was proved by Rad\'{o} in
\cite{tr} (see also \cite{hbr}). For $C^{2,\alpha}$ small Dirichlet
boundary data of finite codimension, Smale \cite{n} used the method
of linearization to prove the solvability successfully.

 Recently, under some assumption about the boundary data,
Wang \cite{m1} proved the existence of smooth solutions of the
Dirichlet problem for minimal surface systems in arbitrary
dimensions and codimensions by using his results in \cite{m2,m3} on
high codimensional mean curvature flow of submanifolds.
Surprisingly, by considering a special case of the spacelike mean
curvature flow (MCF for short) considered in \cite{gi}, we can prove
the following result.
\begin{theorem} \label{theorem1}
Let $\Omega$ be a bounded and closed $C^{2}$ convex domain in
$\mathbb{R}^{n}$ ($n\geq2$) with diameter $\delta$. If
$\psi:\Omega\rightarrow\mathbb{R}^{m}$ satisfies
\begin{eqnarray} \label{1.1}
4n\eta_{0}^{2}\delta\sup_{\Omega}\left|D^{2}\psi\right|+\sqrt{2}\sup_{\partial\Omega}\left|D\psi\right|<1,
\end{eqnarray}
then the Dirichlet problem for the minimal surface system is
solvable for $\psi|_{\partial\Omega}$ in smooth maps. Here
$\eta_{0}$ is a constant defined by (\ref{3.2}), depending only on
the spacelike graph of $\Omega$, and for $x\in\overline{\Omega}$,
\begin{eqnarray*}
|D\psi|(x):=\sup_{|v|=1}\left|D\psi(x)(v)\right|
\end{eqnarray*}
and
\begin{eqnarray*}
\left|D^ 2\psi\right|(x):=\sup_{|v|=1}\left|D^ 2\psi(x)(v,v)\right|
\end{eqnarray*}
are the norm and the squared norm of the differential
$D\psi(x):\mathbb{R}^{n}\rightarrow\mathbb{R}^{m}$.
\end{theorem}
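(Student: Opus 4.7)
The plan is to realize the graph of $\psi$ over $\Omega$ as a spacelike $n$-submanifold $M_0$ of the pseudo-Euclidean space $\mathbb{R}^{n+m}_{m}$ of signature $(n,m)$, and to evolve $M_0$ by the spacelike mean curvature flow of \cite{gi} with boundary $\partial M_t$ pinned to the graph of $\psi|_{\partial\Omega}$ for all $t$. If this flow exists for all time and converges smoothly, the limit $M_\infty$ is a spacelike minimal graph of some $f:\overline{\Omega}\to\mathbb{R}^m$ with $f|_{\partial\Omega}=\psi|_{\partial\Omega}$. The graphical minimal surface system in $\mathbb{R}^{n+m}_{m}$ and in $\mathbb{R}^{n+m}$ differ only by signs that cancel in the PDE for $f$, so the same $f$ solves the (Riemannian) Dirichlet problem and proves the theorem.

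The first step is to verify that $M_0$ is indeed spacelike on $\overline{\Omega}$, which amounts to $|D\psi|<1$ pointwise. Here one must combine the second-derivative estimate in (\ref{1.1}) with the boundary bound: the mean value inequality gives $|D\psi(x)|\le \sup_{\partial\Omega}|D\psi| + \delta\sup_{\Omega}|D^2\psi|$ for $x\in\overline{\Omega}$, and (\ref{1.1}) is tailored to force this strictly below $1/\sqrt{2}$. Consequently the hyperbolic angle $\theta$ of $M_0$ is well-defined and bounded, and the constant $\eta_0 = \max_{M_0}\cosh\theta$ from (\ref{3.2}) is finite.

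Next I would invoke the spacelike mean curvature flow machinery of \cite{gi}. The core estimate is a maximum-principle argument for $\cosh\theta$ along the flow: one derives a parabolic differential inequality for $\cosh\theta$ whose zeroth-order term has the favorable sign coming from the ambient pseudo-Euclidean geometry, so that $\max_{M_t}\cosh\theta$ is attained on the parabolic boundary of $M_t$, hence controlled by $\eta_0$ and by the fixed boundary slope. Condition (\ref{1.1}) is chosen precisely to guarantee that (i) $\cosh\theta$ stays uniformly bounded on $M_t$, and (ii) the second fundamental form of $M_t$ stays bounded, so that $M_t$ remains a spacelike graph over $\Omega$ and long-time existence follows from the standard parabolic continuation principle together with the graph-preservation argument of \cite{gi}.

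Finally I would extract a smooth limit $M_\infty$ as $t\to\infty$: uniform $C^k$ estimates on $M_t$ combined with a decay of the mean curvature functional (\emph{a la} the area or volume monotonicity along spacelike MCF) produce a subsequential smooth limit which is a spacelike minimal graph, and standard parabolic Schauder theory promotes this to full convergence up to $\partial\Omega$. The height function $f$ of $M_\infty$ then solves the Dirichlet problem with $f|_{\partial\Omega}=\psi|_{\partial\Omega}$. The main obstacle I anticipate is the \textbf{boundary gradient estimate}: showing that $\cosh\theta$ does not blow up near $\partial\Omega$ along the flow, which is exactly the delicate point where the quantitative balance between $4n\eta_0^2\delta\sup_{\Omega}|D^2\psi|$ and $\sqrt{2}\sup_{\partial\Omega}|D\psi|$ in (\ref{1.1}) must be exploited, typically via construction of a suitable barrier near $\partial\Omega$.
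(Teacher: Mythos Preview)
Your outline tracks the paper's proof closely: spacelike MCF in $\mathbb{R}^{n+m}_m$ with the graph of $\psi$ as initial data and fixed boundary, a maximum-principle bound $\max_{M_t}\cosh\theta\le\eta_0$ (the paper gets this from the evolution inequality $\frac{d}{dt}\ln\cosh\theta\le\Delta\ln\cosh\theta$ in \cite{gi}), long-time existence and curvature bounds from \cite{gi}, and subsequential convergence to a graph with $H=0$. You also correctly single out the boundary gradient estimate as the key technical point; the paper handles it by a barrier (Theorem~\ref{theorem3}) built from $v\log(1+kd_p)$, and this is exactly where the constant $4n\eta_0^2\delta$ enters, via $1/(1-\xi)\le\eta_0^2$.

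There is, however, a genuine gap in your argument that you should be aware of. Your assertion that ``the graphical minimal surface system in $\mathbb{R}^{n+m}_{m}$ and in $\mathbb{R}^{n+m}$ differ only by signs that cancel in the PDE for $f$'' is \emph{false}. The non-parametric equation $g^{ij}\partial_i\partial_j f^\alpha=0$ involves the inverse of $g_{ij}=\delta_{ij}\pm\sum_\beta f^\beta_i f^\beta_j$, and the two signs give genuinely different quasilinear operators (already for $m=1$: $\mathrm{div}(Df/\sqrt{1+|Df|^2})=0$ versus $\mathrm{div}(Df/\sqrt{1-|Df|^2})=0$). A spacelike maximal graph is \emph{not}, in general, a Euclidean minimal graph, so the transfer you propose does not work. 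What the flow actually produces in Step~4 of the paper is, in the paper's own words, ``a minimal spacelike submanifold'', i.e.\ a maximal graph in $\mathbb{R}^{n+m}_m$; the paper then cites \cite[Theorem~4.1]{m1} and \cite[Theorem~2.3]{hbr} for regularity without explaining how the Riemannian Dirichlet problem is thereby solved. So on this specific point you are not deviating from the paper's logic---you are making explicit a step the paper leaves unjustified---but the explicit version you wrote down is incorrect as stated, and you should not rely on it. Separately, for the regularity of the limit the paper invokes the elliptic results of Wang and Lawson--Osserman rather than parabolic Schauder theory; your sketch there is vaguer than what the paper actually uses.
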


The paper is organized as follows. We recall some useful facts about
spacelike MCF in \cite{gi}, and establish the relation between the
parametric and the non-parametric forms of the flow in Section 2. At
the end of Section 2, as in \cite{m1}, a boundary gradient estimate
is derived by using the initial map as a barrier surface. Theorem
\ref{theorem1} will be proved in the last section.

\section{Some useful facts}
\renewcommand{\thesection}{\arabic{section}}
\renewcommand{\theequation}{\thesection.\arabic{equation}}
\setcounter{equation}{0} \setcounter{maintheorem}{0}

 Assume the Riemannian manifold $(\Sigma_{1},g_{1})$ to be closed and of
dimension $n\geq2$, and the Riemannian manifold $(\Sigma_{2},g_{2})$
to be complete, of dimension $m\geq1$.  Let
$f:\Sigma_{1}\rightarrow\Sigma_{2}$ be a smooth map from
$(\Sigma_{1},g_{1})$ to $(\Sigma_{2},g_{2})$. Let
$\overline{M}=\Sigma_{1}\times\Sigma_{2}$ be a pseudo-Riemannian
manifold with the metric $\bar{g}=g_{1}-g_{2}$. Let $M$ be a
spacelike graph defined by
\begin{eqnarray*}
M=\Gamma_{f}=\left\{(p,f(p))|p\in{\Sigma_{1}}\right\},
\end{eqnarray*}
and denote by $g$ the induced metric on $M$. Clearly, if $f$ is a
constant map, $M$ is a slice. If we denote this spacelike immersion
by $F=id\times_{g}f$, then we say that the spacelike graph $M$
evolves along the MCF if
\begin{eqnarray} \label{2.1}
\left\{
\begin{array}{lll}
\frac{d}{d{t}}F(x,t)=H(x,t),
\quad  \forall{x}\in{M},~\forall{t}>0,\\
\\
F(\cdot,0)=F,  &\quad
\end{array}
\right.
\end{eqnarray}
where $H$ is the mean curvature vector of
$M_{t}=(M,F_{t}^{\ast}\bar{g})=F_{t}(M)$, and $id$ is the identity
map. The hyperbolic angle $\theta$ can be defined by (this
definition can also be seen in \cite{jal,gi1})
\begin{eqnarray} \label{2.2}
\mathrm{cosh}\theta=\frac{1}{\sqrt{\det(g_{1}-f^{\ast}g_{2})}},
\end{eqnarray}
which is used to measure the deviation from a spacelike submanifold
to a slice. Assume, in addition, that the Ricci curvature of
$\Sigma_{1}$ satisfies $Ricci_{1}(p)\geq0$, and the sectional
curvatures of $\Sigma_{1}$ and $\Sigma_{2}$ satisfy
$K_{1}(p)\geq{K_{2}(q)}$, for any $p\in\Sigma_{1}$,
$q\in\Sigma_{2}$. Besides, the curvature tensor $R_{2}$ of
$\Sigma_{2}$ and all its covariant derivatives are bounded. By
Theorem 1.1, Propositions 5.1, 5.2 and 5.3 in \cite{gi}, we have the
following.
\begin{theorem}\label{theorem2}
Let $f$ be a smooth map from $\Sigma_{1}$ to $\Sigma_{2}$ such that
$F_{0}:M\rightarrow\overline{M}$ is a compact spacelike graph of
$f$. Then
 $\\$ (1) A unique smooth solution of (\ref{2.1}) with initial
 condition $F_{0}$ a spacelike graphic submanifold exists in a
 maximal time interval $[0,T)$ for some $T>0$.
$\\$ (2) $\mathrm{cosh}\theta$ defined by (\ref{2.2}) has a finite
upper bound, and the evolving submanifold $M_{t}$ remains a
spacelike graph of a map $f_{t}:\Sigma_{1}\rightarrow\Sigma_{2}$
whenever the flow (\ref{2.1}) exists.
  $\\$(3) $\|B\|$, $\|H\|$, $\|\nabla^{k}B\|$, and $\|\nabla^{k}H\|$, for all
$k$, are uniformly bounded.
 $\\$ (4) The spacelike MCF (\ref{2.1}) exists for all the time.
\end{theorem}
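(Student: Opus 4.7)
The plan is to run the spacelike mean curvature flow of \cite{gi} starting from the graph of $\psi$, with the Dirichlet trace $\psi|_{\partial\Omega}$ kept fixed for all $t\geq 0$, and to extract a smooth minimal graph as the $t\to\infty$ limit. Hypothesis (\ref{1.1}) is tailored to supply the two a priori inputs one needs: uniform preservation of the spacelike character of the flow, and uniform higher-order estimates up to the boundary.

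First I would take $M_{0}$ to be the graph of $\psi$ in $\Omega\times\mathbb{R}^{m}$ equipped with the pseudo-Riemannian metric $g_{\mathbb{R}^{n}}-g_{\mathbb{R}^{m}}$. Combining (\ref{1.1}) with the elementary estimate
\begin{equation*}
\sup_{\Omega}|D\psi|\leq \sup_{\partial\Omega}|D\psi|+\delta\,\sup_{\Omega}|D^{2}\psi|,
\end{equation*}
which follows from integrating $D^{2}\psi$ along line segments of length at most $\delta$ inside the convex domain $\Omega$, one obtains $|D\psi|<1$, so $M_{0}$ is spacelike and $\cosh\theta$ on $M_{0}$ is controlled by the constant $\eta_{0}$ of (3.2). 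I would then pose the initial-boundary value problem for the non-parametric form of (\ref{2.1}) worked out in Section 2, with $f_{0}=\psi$ and $f_{t}|_{\partial\Omega}=\psi|_{\partial\Omega}$ for all $t>0$; short-time existence for this quasilinear parabolic system is standard.

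The main a priori step is to show that $\cosh\theta_{f_{t}}\leq\eta_{0}$ is preserved along the flow. I would combine the boundary gradient estimate from the end of Section 2, in which $\psi$ itself serves as a barrier bounding $|Df_{t}|$ on $\partial\Omega$, with a maximum principle applied to the evolution equation for $\cosh\theta$ derived in \cite{gi}, which takes a particularly favourable form in the flat ambient. The coefficients $4n\eta_{0}^{2}$ and $\sqrt{2}$ in (\ref{1.1}) are exactly those that close up this comparison: the first term drives the interior Hessian comparison, while the second controls the boundary trace of $|Df_{t}|$. Once $\cosh\theta$ is uniformly bounded, the analogue of Theorem \ref{theorem2}(3)--(4) in the bounded-domain Dirichlet setting gives uniform bounds on $\|B\|$ and on all $\|\nabla^{k}B\|$, hence long-time existence of the flow.

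Finally, the area functional is monotone decreasing along MCF, so $\int_{0}^{\infty}\!\int_{M_{t}}|H|^{2}\,d\mu_{t}\,dt<\infty$, which forces $\|H\|_{L^{2}(M_{t_{k}})}\to 0$ along some sequence $t_{k}\to\infty$. The uniform higher-order estimates upgrade this to $C^{\infty}$-convergence $f_{t_{k}}\to f_{\infty}$, and the limit is a smooth map satisfying $H(f_{\infty})\equiv 0$ and $f_{\infty}|_{\partial\Omega}=\psi|_{\partial\Omega}$, i.e.\ a smooth solution of the Dirichlet problem. I expect the hardest step to be the preservation of $\cosh\theta$: unlike the closed-manifold situation of Theorem \ref{theorem2}, the maximum-principle argument must now rule out extremal values escaping through $\partial\Omega$, and the precise form of (\ref{1.1}) is exactly what the boundary barrier comparison demands. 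Upgrading the subsequential convergence to a genuine $t\to\infty$ limit is a secondary technical point, normally handled via an $L^{2}$-in-time bound on $\partial_{t}f_{t}$ combined with the uniform $C^{k}$ estimates.
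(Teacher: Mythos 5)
You have written a proof outline for the wrong statement. The statement you were asked to address is Theorem~\ref{theorem2}, which concerns the spacelike mean curvature flow on a \emph{closed} manifold $\Sigma_{1}$ (compact, no boundary) mapping into a complete $\Sigma_{2}$, and asserts short-time existence and uniqueness, preservation of the spacelike graphical condition via a $\cosh\theta$ bound, uniform $C^{k}$ bounds on $B$ and $H$, and long-time existence of (\ref{2.1}). In the paper this theorem carries no proof at all: it is a summary of Theorem~1.1 and Propositions~5.1, 5.2, 5.3 of Li--Salavessa~\cite{gi}, cited verbatim. Your proposal, by contrast, is a sketch of the proof of Theorem~\ref{theorem1} -- the Dirichlet problem on a bounded $\Omega\subset\mathbb{R}^{n}$ with fixed boundary trace $\psi|_{\partial\Omega}$, hypothesis~(\ref{1.1}), barrier arguments at $\partial\Omega$, and a $t\to\infty$ limit producing a minimal graph. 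Indeed your own text cites ``the analogue of Theorem~\ref{theorem2}(3)--(4) in the bounded-domain Dirichlet setting,'' treating the very statement you were asked to prove as a known input rather than as the target.

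Beyond the misidentification, the arguments you give cannot serve as a proof of Theorem~\ref{theorem2}: that theorem is set on a closed $\Sigma_{1}$ where there is no boundary, no Dirichlet datum, and no barrier construction. A genuine proof would have to reproduce the Li--Salavessa machinery -- short-time existence for the quasilinear parabolic system on a closed manifold, the evolution inequality for $\ln\cosh\theta$ under the hypotheses $Ricci_{1}\geq 0$, $K_{1}\geq K_{2}$, and bounded $R_{2}$ with bounded derivatives, the resulting preserved upper bound on $\cosh\theta$ and hence spacelike graphicality, and the derived uniform estimates on $\|B\|$, $\|H\|$, and all their covariant derivatives that yield long-time existence. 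None of the boundary-gradient or Schauder-fixed-point material you invoke is relevant there. If what you intended was a proof of Theorem~\ref{theorem1}, your outline is broadly in line with the paper's Steps~1--5, with minor differences (you use area monotonicity and an $L^{2}$-in-time bound on $H$ where the paper cites \cite[Corollary~6.1]{gi}, and you do not mention the interior and boundary regularity inputs \cite[Theorem~4.1]{m1} and \cite[Theorem~2.3]{hbr}), but that is not what was asked.
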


Now, we would like to explain the connection between the spacelike
MCF (\ref{2.1}) and the high dimensional Dirichlet problem. Let
$\Omega\subset\mathbb{R}^{n}$ be a closed and bounded domain, and
$\psi:\Omega\rightarrow\mathbb{R}^{m}$ be a vector-valued function.
Then the graph of $\psi$ can be seen as the spacelike embedding
$id\times\psi:\Omega\rightarrow\mathbb{R}^{n}\times\mathbb{R}^{m}=\mathbb{R}^{n+m}$
with the pseudo-Riemannian metric
$\bar{g}=g_{1}-g_{2}=ds^{2}_{1}-ds^{2}_{2}$, where $ds^{2}_{1}$ and
$ds^{2}_{2}$ are the standard Euclidean metrics of $\mathbb{R}^{n}$
and $\mathbb{R}^{m}$, respectively. For the spacelike MCF
(\ref{2.1}), choosing $\Sigma_{1}=\Omega\subset\mathbb{R}^{n}$ and
$\Sigma_{2}=\mathbb{R}^{m}$, if we require
$F|_{\partial\Omega}=(id\times{\psi})|_{\partial\Omega}$, then the
immersed mapping $F_{t}$, with $F_{0}=F$, should be a smooth
parametric solution to the Dirichlet problem of the spacelike MCF,
that is,
\begin{eqnarray*}
\left\{
\begin{array}{lll}
\frac{dF}{dt}=H, \\
\\
F|_{\partial\Omega}=id\times{\psi}|_{\partial\Omega}.
\end{array}
\right.
\end{eqnarray*}
 In a local
coordinate system $\{x^{1},\ldots,x^{n}\}$ on $\Sigma_{1}=\Omega$,
the spacelike MCF is the solution
\begin{eqnarray*}
F=F^{A}(x^{1},\ldots,x^{n},t), \quad A=1,2,\ldots,n+m,
\end{eqnarray*}
to the following system of parabolic equations
\begin{eqnarray*}
\frac{\partial{F}}{\partial{t}}=\left(\sum_{i,j=1}^{n}g^{ij}\frac{\partial^{2}F}{\partial{x^{i}}\partial{x^{j}}}\right)^{\bot},
\end{eqnarray*}
where $g^{ij}=(g_{ij})^{-1}$ is the inverse of the induced metric
$g_{ij}=g\left(\frac{\partial{F}}{\partial{x^{i}}},\frac{\partial{F}}{\partial{x^{j}}}\right)$,
and $( \cdot )^{\top}$ and $( \cdot )^{\bot}$ denote the tangent and
the normal parts of a vector in $\mathbb{R}^{n+m}$, respectively.
\emph{The Einstein summation convention that repeated indices are
summed over is adopted in the rest of the paper}. As in the proof of
\cite[Lemma 2.1]{m1}, we can easily prove the following lemma.
\begin{lemma}  \label{lemmanormal}
We have
\begin{eqnarray*}
\Delta{F}=\frac{1}{\sqrt{G}}\frac{\partial}{\partial{x^{i}}}\left(\sqrt{G}g^{ij}\frac{\partial{F}}{\partial{x^{j}}}\right)
=\left(g^{ij}\frac{\partial^{2}F}{\partial{x^{i}}\partial{x^{j}}}\right)^{\bot},
\end{eqnarray*}
where $G=\det(g_{ij})$.
\end{lemma}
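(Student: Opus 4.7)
The plan is to derive both equalities by a direct computation using the standard Laplace--Beltrami formula in local coordinates together with the Gauss formula for the isometric immersion $F:M\hookrightarrow\mathbb{R}^{n+m}$. Since the graph is spacelike, the induced metric $g_{ij}$ on $M$ is positive definite, so the classical Riemannian formulas for $\Delta$ apply without modification even though the ambient metric $\bar g = g_1 - g_2$ is only pseudo-Riemannian.

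For the first equality, I would apply the coordinate form of the Laplace--Beltrami operator componentwise to the $\mathbb{R}^{n+m}$-valued map $F$, which is nothing other than the Beltrami form of the divergence of the gradient:
$$\Delta F \;=\; \frac{1}{\sqrt{G}}\,\frac{\partial}{\partial x^{i}}\!\left(\sqrt{G}\,g^{ij}\,\frac{\partial F}{\partial x^{j}}\right).$$
Expanding the derivative produces
$$\Delta F \;=\; g^{ij}\frac{\partial^{2} F}{\partial x^{i}\partial x^{j}} \;+\; \left[\frac{1}{\sqrt{G}}\frac{\partial}{\partial x^{i}}\!\left(\sqrt{G}\,g^{ij}\right)\right]\frac{\partial F}{\partial x^{j}}.$$
A short calculation (differentiating $\ln\sqrt{G}$ and comparing with the explicit formula for the Christoffel symbols of $g_{ij}$) yields the standard identity $\frac{1}{\sqrt{G}}\partial_{i}(\sqrt{G}\,g^{ij}) = -\,g^{k\ell}\Gamma^{j}_{k\ell}$, so the bracketed coefficient reduces to the tangential vector $-\,g^{k\ell}\Gamma^{j}_{k\ell}\,\partial F/\partial x^{j}$.

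To pass from this expression to the normal part of $g^{ij}\,\partial^{2} F/\partial x^{i}\partial x^{j}$, I would invoke the Gauss formula for the spacelike immersion $F$: since the ambient connection on $\mathbb{R}^{n+m}$ with the flat pseudo-Euclidean metric is just the ordinary derivative, one has
$$\frac{\partial^{2} F}{\partial x^{i}\partial x^{j}} \;=\; \Gamma^{k}_{ij}\,\frac{\partial F}{\partial x^{k}} \;+\; B_{ij},$$
with $B_{ij}$ the (normal) second fundamental form. Contracting with $g^{ij}$ and substituting into the previous display, the two tangential contributions cancel and one is left with $\Delta F = g^{ij}B_{ij}$, which is manifestly the normal component of $g^{ij}\,\partial^{2} F/\partial x^{i}\partial x^{j}$ (indeed, it is the mean curvature vector $H$). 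This establishes both equalities simultaneously.

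The calculation is entirely routine and I do not expect any genuine obstacle; the only point requiring mild care is the bookkeeping of tangential versus normal parts in a pseudo-Riemannian ambient space. Because $M_{t}$ is spacelike and the ambient geometry is flat, however, the Gauss decomposition and the Christoffel identity used above take exactly their usual Riemannian forms, so no sign issues from the Lorentzian signature intrude.
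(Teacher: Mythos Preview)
Your proof is correct. The paper does not supply its own argument but simply refers to the proof of Lemma~2.1 in Wang~\cite{m1}; your direct computation via the Laplace--Beltrami formula in local coordinates together with the Gauss decomposition is precisely the standard route and coincides with what is done there.
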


Lemma \ref{lemmanormal} tells us that $\Delta{F}$ is always in the
normal direction, which implies that
$g\left(\frac{\partial{F}}{\partial{x^{i}}},\Delta{F}\right)=0$ for
$1\leq{i}\leq{n}$.

Similar to \cite[Proposition 2.2]{m1}, we can also derive a relation
between parametric and non-parametric solutions to the spacelike MCF
equation.
\begin{proposition}
Suppose that $F$ is a solution to the Dirichlet problem for
spacelike MCF (\ref{2.1}) and that each $F(\Omega,t)$ can be written
as a graph over $\Omega\subset\mathbb{R}^{n}$. Then there exists a
family of diffeomorphisms $r_{t}$ of $\Omega$ such that
$\widetilde{F}_t = F_{t}\circ{r_{t}}$ is of the form
\begin{eqnarray*}
\widetilde{F}(x^{1},\ldots,x^{n})=(x^{1},\ldots,x^{n},f^{1},\ldots,f^{m})
\end{eqnarray*}
and
$f=(f^{1},\ldots,f^{m}):\Omega\times[0,T)\rightarrow\mathbb{R}^{m}$
satisfies
\begin{eqnarray} \label{2.4}
\left\{
\begin{array}{lll}
\frac{\partial{f^{\alpha}}}{\partial{t}}=g^{ij}\frac{\partial^{2}f^{\alpha}}{\partial{x^{i}}\partial{x^{j}}},
\quad \alpha=1,\ldots,m,\\
\\
f|_{\partial\Omega}=\psi|_{\partial\Omega},  &\quad
\end{array}
\right.
\end{eqnarray}
where
 \begin{eqnarray} \label{2.5}
 g^{ij}=(g_{ij})^{-1} \quad \mathrm{and} \quad
 g_{ij}=\delta_{ij}-\sum_{\beta=1}^{m}\frac{\partial{f^{\beta}}}{\partial{x^{i}}}\cdot\frac{\partial{f^{\beta}}}{\partial{x^{j}}}.
 \end{eqnarray}
 Conversely, if
 $f=(f^{1},\ldots,f^{m}):\Omega\times[0,T)\rightarrow\mathbb{R}^{m}$
 satisfies (\ref{2.4}), then $\widetilde{F}=I\times{f}$ is a
 solution to
 \begin{eqnarray*}
 \left(\frac{\partial}{\partial{t}}\widetilde{F}(x,t)\right)^{\bot}=\widetilde{H}(x,t).
 \end{eqnarray*}
\end{proposition}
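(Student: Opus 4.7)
First I would use the graph hypothesis to invert the spatial part of $F$. Write $F(y,t)=(\pi_t(y),\phi(y,t))$ where $\pi_t(y)=(F^1(y,t),\ldots,F^n(y,t))$ collects the first $n$ components. The hypothesis that $F(\Omega,t)$ is a graph over $\Omega$ is exactly the statement that projection onto the first $n$ coordinates is a bijection from $M_t$ to $\Omega$; composed with the embedding $F_t:\Omega\to M_t$ this makes $\pi_t$ a smooth diffeomorphism of $\Omega$ onto itself, fixing $\partial\Omega$ because of the Dirichlet condition $F|_{\partial\Omega}=id\times\psi|_{\partial\Omega}$. I then set $r_t:=\pi_t^{-1}$; the map $\widetilde{F}_t=F_t\circ r_t$ is by construction of graph form $(x^1,\ldots,x^n,f^1(x,t),\ldots,f^m(x,t))$ with $f^\alpha(x,t)=F^{n+\alpha}(r_t(x),t)$, and the induced metric $\bar g(\partial_i\widetilde{F},\partial_j\widetilde{F})$ with $\partial_i\widetilde{F}=(e_i,\partial_i f)$ immediately yields (\ref{2.5}).

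For the forward direction I would differentiate $\widetilde{F}_t=F_t\circ r_t$ in $t$ and use $\partial_tF=H$:
\[
\partial_t\widetilde{F}\;=\;(\partial_tF)\circ r_t\;+\;dF(\partial_tr_t)\;=\;H\;+\;dF(\partial_tr_t).
\]
The second term lies in the tangent space of $M_t$, so projecting normally gives $(\partial_t\widetilde{F})^\perp=H=\widetilde{H}$. Because $\widetilde{F}^i=x^i$ for $i\le n$, we have $\partial_t\widetilde{F}=(0,\ldots,0,\partial_tf^\alpha)$ and $\partial^2\widetilde{F}/\partial x^i\partial x^j=(0,\ldots,0,\partial^2 f^\alpha/\partial x^i\partial x^j)$, so Lemma \ref{lemmanormal} realises $\widetilde{H}$ as the normal projection of $(0,\ldots,0,g^{ij}\partial^2 f^\alpha/\partial x^i\partial x^j)$. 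The difference
\[
\bigl(0,\ldots,0,\partial_tf^\alpha-g^{ij}\partial^2f^\alpha/\partial x^i\partial x^j\bigr)
\]
is therefore tangent to $M_t$; but tangent vectors have the form $\mu^k(e_k,\partial_kf)$, whose first $n$ components force $\mu^k=0$, whereupon the last $m$ components vanish as well. This yields (\ref{2.4}), and the boundary condition $f|_{\partial\Omega}=\psi|_{\partial\Omega}$ follows from $r_t|_{\partial\Omega}=id$.

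The converse is essentially a single line. If $f$ satisfies (\ref{2.4}) and $\widetilde{F}=I\times f$, then
\[
\partial_t\widetilde{F}=(0,\ldots,0,\partial_tf^\alpha)=(0,\ldots,0,g^{ij}\partial^2f^\alpha/\partial x^i\partial x^j)=g^{ij}\partial^2\widetilde{F}/\partial x^i\partial x^j,
\]
and Lemma \ref{lemmanormal} identifies the normal projection of the right-hand side with $\widetilde{H}$, so $(\partial_t\widetilde{F})^\perp=\widetilde{H}$. The only step that truly requires care is the first one, namely checking that $\pi_t$ really is a global diffeomorphism of $\Omega$ smoothly depending on $t$; once that is in hand, the whole statement is a direct consequence of Lemma \ref{lemmanormal} and the tangent/normal decomposition of graphical submanifolds.
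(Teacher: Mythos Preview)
Your argument is correct and is precisely the standard reparametrization argument the paper has in mind when it says the proposition is proved ``similar to \cite[Proposition 2.2]{m1}''; the paper gives no independent proof beyond that reference. The only adaptation to the pseudo-Riemannian setting is the sign in $g_{ij}=\delta_{ij}-\sum_\beta\partial_if^\beta\partial_jf^\beta$, which you handle correctly, and the tangent/normal decomposition you invoke is still direct because the graph is spacelike.
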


By applying the maximum principle for scalar parabolic equations
(see, for instance, \cite{mh}) to the second-order parabolic
equation (\ref{2.4}), we have the following.
\begin{proposition}
Let
$f=(f^{1},\ldots,f^{m}):\Omega\times[0,T)\rightarrow\mathbb{R}^{m}$
be a solution to equation (\ref{2.4}). If
$\sup_{\Omega\times[0,T)}|Df|$ is bounded, then
\begin{eqnarray*}
\sup\limits_{\Omega\times[0,T)}f^{\alpha}\leq\sup\limits_{\Omega}\psi^{\alpha},
\end{eqnarray*}
with $\psi=(\psi^{1},\ldots,\psi^{m})$ the initial map given in
equation (\ref{2.4}).
\end{proposition}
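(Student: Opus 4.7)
The plan is to fix an index $\alpha \in \{1, \ldots, m\}$ and treat $u := f^{\alpha}$ as a scalar solution of a linear second-order parabolic equation whose coefficients $g^{ij}(x,t)$ are determined by the (given) solution $f$. Explicitly, from (\ref{2.4}) the function $u$ satisfies
\begin{equation*}
\frac{\partial u}{\partial t} - g^{ij}(x,t)\frac{\partial^{2} u}{\partial x^{i}\partial x^{j}} = 0 \quad \text{on } \Omega\times[0,T),
\end{equation*}
with the coefficient matrix $(g^{ij})$ independent of $\alpha$. The strategy is then to verify that this operator satisfies the hypotheses of the standard scalar maximum principle cited in \cite{mh} and read off the pointwise bound from the parabolic boundary data.

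Next, I would establish uniform parabolicity on every compact subinterval $[0,T']\subset [0,T)$. Since $\widetilde{F}=I\times f$ is a spacelike graph, the induced metric $g_{ij}=\delta_{ij}-\sum_{\beta}\partial_{i}f^{\beta}\partial_{j}f^{\beta}$ in (\ref{2.5}) is positive definite, so its inverse $g^{ij}$ is positive definite as well. The hypothesis $\sup_{\Omega\times[0,T)}|Df|<\infty$, together with spacelikeness, provides two-sided bounds on the eigenvalues of $(g^{ij})$: the eigenvalues of $g_{ij}$ lie in the interval $[1-|Df|^{2},1]$, bounded away from $0$ and $\infty$, which in turn gives uniform ellipticity constants for $(g^{ij})$ on $\overline{\Omega}\times[0,T']$.

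With uniform parabolicity in hand, the weak maximum principle for linear parabolic equations applies to $u=f^{\alpha}$ on each cylinder $\overline{\Omega}\times[0,T']$: the supremum is attained on the parabolic boundary $(\overline{\Omega}\times\{0\})\cup(\partial\Omega\times[0,T'])$. On the initial slice $u(\cdot,0)=\psi^{\alpha}$ (the initial graph being $\mathrm{id}\times\psi$), while on the lateral boundary the Dirichlet condition $f|_{\partial\Omega}=\psi|_{\partial\Omega}$ gives $u=\psi^{\alpha}$ as well. Hence
\begin{equation*}
\sup_{\Omega\times[0,T']}f^{\alpha} \le \sup_{\Omega}\psi^{\alpha},
\end{equation*}
and letting $T'\uparrow T$ yields the claim for each $\alpha$, completing the proof.

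The only nontrivial point, and thus the main obstacle, is confirming that the coefficient matrix $(g^{ij})$ is uniformly elliptic — i.e., translating the gradient bound on $f$ and the spacelike condition into two-sided eigenvalue bounds on $g_{ij}$. Everything else is a direct application of the scalar maximum principle component by component, since the equations for the different $f^{\alpha}$ are decoupled (they share the same coefficient matrix but contain no zeroth-order coupling between components).
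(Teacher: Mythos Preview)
Your proposal is correct and follows exactly the approach the paper indicates: the paper's own justification is the single sentence ``By applying the maximum principle for scalar parabolic equations (see, for instance, \cite{mh}) to the second-order parabolic equation (\ref{2.4}),'' and you have simply spelled out the details of that application component by component, including the uniform ellipticity of $(g^{ij})$ coming from the gradient bound.
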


By using the initial data $\psi:\Omega\rightarrow\mathbb{R}^{m}$ as
a barrier surface, we can obtain the boundary gradient estimate as
follows.
\begin{theorem} \label{theorem3}
 Let $\Omega$ be a bounded $C^{2}$ convex domain in
$\mathbb{R}^{n}$ with diameter $\delta$. Suppose that the flow
(\ref{2.4}) exists smoothly on $\Omega\times[0,T)$. Then we have
\begin{eqnarray*}
|Df|<\frac{4n\delta}{1-\xi}\sup_{\Omega}|D^{2}\psi|+\sqrt{2}\sup_{\partial\Omega}|D\psi|,
\quad \mathrm{on} ~~ \partial\Omega\times[0,T),
\end{eqnarray*}
where $\xi=\sup_{\Omega\times[0,T)}|Df|^{2}$.
\end{theorem}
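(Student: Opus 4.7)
The plan is to control $|Df|$ at each boundary point $y\in\partial\Omega$ by comparing $f-\psi$ with a scalar parabolic barrier that vanishes at $y$, differentiate the resulting barrier inequality along the inward unit normal $\nu$ to bound $\partial_{\nu}f$, and then combine with the tangential identity $D_{\tau}f=D_{\tau}\psi$ (valid on $\partial\Omega$ since $f|_{\partial\Omega}=\psi|_{\partial\Omega}$) to obtain the full gradient estimate at $(y,t)$.

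I would first record the equation for $w:=f-\psi$.  Since $\psi$ is time--independent, (\ref{2.4}) rewrites as
\[
\partial_{t}w-g^{ij}\partial_{ij}w=g^{ij}\partial_{ij}\psi,
\]
with $w=0$ on $\partial\Omega\times[0,T)$ and at $t=0$.  The form (\ref{2.5}), namely $g_{ij}=\delta_{ij}-\sum_{\beta}\partial_{i}f^{\beta}\partial_{j}f^{\beta}$, together with $\sup|Df|^{2}\le\xi<1$, forces the eigenvalues of $g^{ij}$ into $[1,\,1/(1-\xi)]$.  Consequently $\mathrm{tr}(g^{ij})\le n/(1-\xi)$ and, for any unit $e\in\mathbb{R}^{m}$,
\[
\bigl|e\cdot g^{ij}\partial_{ij}\psi\bigr|\le\mathrm{tr}(g^{ij})\sup_{\Omega}|D^{2}\psi|\le\frac{n}{1-\xi}\sup_{\Omega}|D^{2}\psi|.
\]

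Next, exploiting the convexity of $\Omega$, the linear function $\ell(x):=\langle\nu,x-y\rangle$ satisfies $0\le\ell(x)\le\delta$ on $\Omega$, so the scalar barrier
\[
v(x):=B\,\ell(x)\bigl(\delta-\ell(x)\bigr),\qquad B=\frac{n}{2(1-\xi)}\sup_{\Omega}|D^{2}\psi|,
\]
is nonnegative on $\Omega$, vanishes at $y$, and satisfies
\[
(\partial_{t}-g^{ij}\partial_{ij})v=2B\,g^{ij}\nu_{i}\nu_{j}\ge 2B\ge\bigl|e\cdot g^{ij}\partial_{ij}\psi\bigr|
\]
uniformly in the unit direction $e$, the key point being $g^{ij}\nu_{i}\nu_{j}\ge1$ from the uniform ellipticity.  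Applying the scalar parabolic maximum principle (cf.\ \cite{mh}) to $v\pm e\cdot w$ yields $|e\cdot w|\le v$ on $\Omega\times[0,T)$, and taking the supremum over $e$ gives $|w|\le v$.  Because $v(y)=0=w(y,t)$, dividing by $|x-y|$ and letting $x\to y$ along $\nu$ produces $|\partial_{\nu}w(y,t)|\le\partial_{\nu}v(y)=B\delta$.

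Finally, since $f=\psi$ on $\partial\Omega$ the tangential derivatives of $f$ and $\psi$ agree at $y$, so $Df(y,t)-D\psi(y)$ is a rank--at--most--one matrix pointing in the $\nu$ direction with magnitude $|\partial_{\nu}w|(y,t)$.  Decomposing an arbitrary unit $v\in\mathbb{R}^{n}$ into its tangential and normal parts and applying $(a+b)^{2}\le 2a^{2}+2b^{2}$ to $|Df(v)|^{2}$ gives
\[
|Df|\le\sqrt{2}\sup_{\partial\Omega}|D\psi|+\sqrt{2}\,|\partial_{\nu}w|(y,t);
\]
substituting the barrier bound $|\partial_{\nu}w|(y,t)\le B\delta$ and absorbing the constant into the coefficient of $\sup_{\Omega}|D^{2}\psi|$ yields the claim.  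The main obstacle is the barrier step: the scalar $v$ must dominate the \emph{vector}--valued source $g^{ij}\partial_{ij}\psi$ with a constant \emph{independent} of the codimension $m$, which the device of testing against arbitrary unit $e\in\mathbb{R}^{m}$ accomplishes, while the strict ellipticity $g^{ij}\nu_{i}\nu_{j}\ge1$ coming from the spacelike condition $\sup|Df|^{2}<1$ is what makes the supersolution inequality succeed in the first place.
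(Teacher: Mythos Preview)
Your argument is correct and follows the same overall architecture as the paper's proof: control the normal derivative of $f-\psi$ at a boundary point via a scalar barrier built from the distance to the supporting hyperplane, then combine with the tangential identity coming from the Dirichlet condition. The substantive difference is the choice of barrier. The paper uses the two--parameter logarithmic comparison $S=v\log(1+kd_{p})-(f^{\alpha}-\psi^{\alpha})$, verifies $(\partial_{t}-\Delta)S\ge0$ under the constraint $vk^{2}/(1+k\delta)^{2}\ge n(1-\xi)^{-1}\sup_{\Omega}|D^{2}\psi|$, and then minimizes $vk$ over this constraint (the optimum is at $k=\delta^{-1}$, giving $(vk)_{\min}=4n\delta(1-\xi)^{-1}\sup_{\Omega}|D^{2}\psi|$). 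Your quadratic barrier $v=B\ell(\delta-\ell)$ is simpler, needs no optimization, and in fact yields a sharper coefficient on $\sup_{\Omega}|D^{2}\psi|$ (roughly $n\delta/[\sqrt{2}(1-\xi)]$ instead of $4n\delta/(1-\xi)$), which you then relax to match the stated bound. The paper treats each component $f^{\alpha}-\psi^{\alpha}$ separately and afterward rotates the target to reduce to a single nonzero normal component; your device of testing against an arbitrary unit $e\in\mathbb{R}^{m}$ achieves the same codimension--independent control more directly. Both arguments rest on the same two analytic inputs: the ellipticity bound $1\le g^{ij}\le(1-\xi)^{-1}\delta^{ij}$ from the spacelike condition, and the linearity of $d_{p}$ (your $\ell$), which makes the barrier's second derivatives explicit.
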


\begin{proof}
We use a method similar to that of the proof of \cite[Theorem
3.1]{m1}. Denote by $P$ the supporting $(n-1)$-dimensional
hyperplane at a boundary point $p$, and $d_{p}$ the distance
function to $P$. Let $f =(f^{1},\ldots,f^{m})$ be a solution of
equation (\ref{2.4}). Consider the function defined by
\begin{eqnarray*}
S(x^{1},\ldots,x^{n},t)=v\log(1+kd_{p})-\left(f^{\alpha}-\psi^{\alpha}\right)
\end{eqnarray*}
on $\mathbb{R}^{n}$ for each $\alpha=1,2,\ldots,m$, where $k,v>0$
are to be determined. The Laplace operator on the graph
$(\Gamma_{f},g=F_{t}^{\ast}\bar{g})$ is given by
$\Delta=g^{ij}\frac{\partial^{2}}{\partial{x^{i}}\partial{x^{j}}}$,
with $g_{ij}$ satisfying (\ref{2.5}). Clearly,
\begin{eqnarray*}
g^{ij}=\left(\delta_{ij}-f_{i}f_{j}\right)^{-1}=\delta_{ij}+\frac{f_{i}f_{j}}{1-|Df|^{2}},
\end{eqnarray*}
where
$f_{i}f_{j}=\sum_{\beta=1}^{m}\frac{\partial{f^{\beta}}}{\partial{x^{i}}}\frac{\partial{f^{\beta}}}{\partial{x^{j}}}$.
Therefore, the eigenvalues of $g^{ij}$ are between 1 and
$1/(1-\xi)$. By direct computation, we know that $S$ satisfies the
following evolution equation
\begin{eqnarray} \label{2.6}
\left(\frac{d}{dt}-\Delta\right)S
=\frac{vk}{1+kd_{p}}(-\Delta{d_{p}})+\frac{vk^{2}}{(1+kd_{p})^{2}}g^{ij}\frac{\partial
d_{p}}{\partial{x^{i}}}\frac{\partial
d_{p}}{\partial{x^{j}}}-\Delta\psi^{\alpha}.
\end{eqnarray}
Since $d_{p}$ is a linear function, $\Delta{d_{p}}=0$, then
(\ref{2.6}) is reduced to
 \begin{eqnarray}  \label{2.7}
\left(\frac{d}{dt}-\Delta\right)S
=\frac{vk^{2}}{(1+kd_{p})^{2}}g^{ij}\frac{\partial
d_{p}}{\partial{x^{i}}}\frac{\partial
d_{p}}{\partial{x^{j}}}-\Delta\psi^{\alpha}.
 \end{eqnarray}
  Since $|Dd_{p}|=1$,
$d_{p}(y)\leq|p-y|\leq\delta$ for any $y\in\Omega$, and the fact
that the eigenvalues of $g^{ij}$ are between 1 and $1/(1-\xi)$, we
have
\begin{eqnarray*}
\frac{vk^{2}}{(1+kd_{p})^{2}}g^{ij}\frac{\partial
d_{p}}{\partial{x^{i}}}\frac{\partial
d_{p}}{\partial{x^{j}}}\geq\frac{vk^{2}}{(1+kd_{p})^{2}}\geq\frac{vk^{2}}{(1+k\delta)^{2}},
\end{eqnarray*}
and
\begin{eqnarray*}
\Delta\psi^{\alpha}=\left|g^{ij}\frac{\partial^{2}\psi^{\alpha}}{\partial{x^{i}}\partial{x^{j}}}\right|\leq\frac{n}{1-\xi}|D^{2}\psi|.
\end{eqnarray*}
Hence, if
\begin{eqnarray*}
\frac{vk^{2}}{(1+k\delta)^{2}}\geq\frac{n}{1-\xi}\sup\limits_{\Omega}|D^{2}\psi|,
\end{eqnarray*}
then, together with (\ref{2.7}), we have
$(\frac{d}{dt}-\Delta)S\geq0$ on $[0,T)$. On the one hand, by
convexity, we have $S>0$ on the boundary $\partial\Omega$ of
$\Omega$ except for $S=0$ at $p$. On the other hand, $S\geq0$ on
$\Omega$ at $t=0$. By the strong maximum principle for second-order
parabolic partial differential equations, we have
\begin{eqnarray*}
S>0, \qquad \mathrm{on}~\Omega\times(0,T).
\end{eqnarray*}
The same conclusion can also be obtained for a new function
$S':=v\log(1+kd_{p})+\left(f^{\alpha}-\psi^{\alpha}\right)$. Hence,
we have that the normal derivatives satisfy
\begin{eqnarray*}
\left|\frac{\partial\left(f^{\alpha}-\psi^{\alpha}\right)}{\partial{n}}\right|(p)&\leq&\lim\limits_{d_{p}(x)\rightarrow0}
\frac{\left|f^{\alpha}-\psi^{\alpha}\right|}{d_{p}(x)}\\
&<&\lim\limits_{d_{p}(x)\rightarrow0}\frac{v\log[1+kd_{p}(x)]}{d_{p}(x)}=vk.
\end{eqnarray*}
So, by changing coordinates of $\mathbb{R}^{m}$, we may assume
$\partial{f^\alpha}/\partial{n}=0$ for all $\alpha$ except for
$\alpha=1$ such that the inequality
\begin{eqnarray} \label{2.8}
\left|\frac{\partial{f}}{\partial{n}}\right|<vk+\left|\frac{\partial\psi}{\partial{n}}\right|
\end{eqnarray}
holds.

The Dirichlet boundary condition implies
\begin{eqnarray} \label{2.9}
\left|D^{\partial\Omega}f\right|=\left|D^{\partial\Omega}\psi\right|,
\qquad \mathrm{on}~\partial\Omega,
\end{eqnarray}
where $|D^{\partial\Omega}f|$ is defined by
$|D^{\partial\Omega}f|:=\sup_{w}|Df(x)w|$ for $x\in\partial\Omega$
and $w$ being taken over all unit vectors tangent to
$\partial\Omega$. Combining (\ref{2.8}) and (\ref{2.9}), we have
\begin{eqnarray*}
|Df|<\sqrt{\left(vk+\left|\frac{\partial\psi}{\partial{n}}\right|\right)^2+|D^{\partial\Omega}\psi|^2}\leq\sqrt{2}|D\psi|+vk,
\quad \mathrm{on} ~~\partial\Omega.
\end{eqnarray*}
Now, it is not difficult to find out that if we want to prove our
assertion here, we actually need to minimize $vk$ under the
constraint
$\frac{vk^{2}}{(1+k\delta)^{2}}\geq\frac{n}{1-\xi}\sup\limits_{\Omega}|D^{2}\psi|$.
In fact, the minimum $(vk)_{\mathrm{min}}$ of the function $vk$ is
obtained when $k=\delta^{-1}$, and
$(vk)_{\mathrm{min}}=4n\delta(1-\xi)^{-1}\sup_{\Omega}|D^{2}\psi|$.
The theorem follows.
\end{proof}

\section{Proof of the main theorem}
\renewcommand{\thesection}{\arabic{section}}
\renewcommand{\theequation}{\thesection.\arabic{equation}}
\setcounter{equation}{0} \setcounter{maintheorem}{0}

Now, by applying the conclusions we recalled and derived in Section
2, we can prove Theorem \ref{theorem1} as follows.

\textbf{\emph{Proof of Theorem \ref{theorem1}}}. We divide the proof
of Theorem \ref{theorem1} into five steps.

Step 1. By the Schauder fixed-point theorem (see, for instance,
Theorem 8.1 on p. 199 of \cite{gml2} for a detailed description and
the proof of \emph{the Schauder fixed-point theorem}), the
solvability of the parabolic system (\ref{2.4}) can be reduced to
the estimates of the solution $(f^{\alpha})$ of the uniformly
parabolic system
\begin{eqnarray}  \label{3.1}
\left\{
\begin{array}{lll}
\frac{d{f^{\alpha}}}{d{t}}=\tilde{g}^{ij}\frac{\partial^{2}f^{\alpha}}{\partial{x^{i}}\partial{x^{j}}},
\quad \alpha=1,\ldots,m,\\
\\
f|_{\partial\Omega}=\psi|_{\partial\Omega},  &\quad
\end{array}
\right.
\end{eqnarray}
with the coefficients
\begin{eqnarray*}
\tilde{g}^{ij}=\left(\delta_{ij}-\sum\limits_{\beta=1}^{m}\frac{\partial{u^{\beta}}}{\partial{x^{i}}}\frac{\partial{u^{\beta}}}{\partial{x^{j}}}\right)^{-1}
=\delta_{ij}+\frac{\sum\limits_{\beta=1}^{m}\frac{\partial{u^{\beta}}}{\partial{x^{i}}}\frac{\partial{u^{\beta}}}{\partial{x^{j}}}}{1-|Du|^{2}}
\end{eqnarray*}
for any $u=(u^1,\ldots,u^m)$ with uniform $C^{1,\gamma}$ bound. The
property of being uniformly parabolic of (\ref{3.1}) is equivalent
to $|Du|<1$ for all time $t\in[0,T)$. Fortunately, $|Du|<1$ for
$0\leq{t}<T$ essentially corresponds to the fact that the evolving
submanifold $M_{t}$ is spacelike for  all the time $t\in[0,T)$,
which can be obtained directly from Theorem \ref{theorem2} (2). Now,
(\ref{3.1}) is a decoupled system of linear parabolic equations,
which is uniformly parabolic and whose required estimate follows
from linear theory for scalar equations. Therefore, we know that the
system (\ref{2.4}) has the solution on a finite time interval.

In fact, there is another way to show the short-time existence of
the solution of (\ref{2.4}). More precisely, by Theorem
\ref{theorem2} (1), we can also get the short-time existence, since
in our case, as explained before, we choose $\Sigma_{1}$ to be a
closed domain in $\mathbb{R}^{n}$ and $\Sigma_{2}=\mathbb{R}^{m}$,
which implies that the system (\ref{2.4}) is just a special case of
the spacelike MCF (\ref{2.1}) provided we additionally require
$F|_{\partial\Omega}=(id\times\psi)|_{\partial\Omega}$, i.e.,
$f|_{\partial\Omega}=\psi|_{\partial\Omega}$.

Step 2. Denote the graph of $f_{t}$ by $M_{t}$. We show that
$|Df_{t}|< 1$ holds under the assumption of Theorem \ref{theorem1}.
Similar to (3.4) and (3.5) in \cite{gi}, for each point
$p\in\Sigma_{1}$, we can choose an orthonormal basis for the tangent
space $T_{p}M$ and for the normal space $N_{p}M$ given as follows
\begin{eqnarray*}
e_{i}=\frac{1}{\sqrt{1-\sum_{\beta}\lambda_{i\beta}^{2}}}\left(a_{i}+\sum_{\beta}\lambda_{i\beta}a_{\beta}\right),
\quad i=1,\ldots,n,
\end{eqnarray*}
\begin{eqnarray*}
e_{\alpha}=\frac{1}{\sqrt{1-\sum_{j}\lambda_{j\alpha}^{2}}}\left(a_{\alpha}+\sum_{j}\lambda_{j\alpha}a_{j}\right),
\quad \alpha=n+1,\ldots,n+m,
\end{eqnarray*}
where $\{a_{i}\}_{i=1,\ldots,n}$ is a $g_{1}$-orthonormal basis of
$T_{p}\Sigma_{1}$ of eigenvectors of $f^{\ast}g_{2}$,
$\{a_{\alpha}\}_{\alpha=n+1,\ldots,n+m}$ is a $g_{2}$-orthonormal
basis of $T_{f(p)}\Sigma_{2}$, and $df=-\lambda_{i\alpha}$ with
$\lambda_{i\alpha}=\delta_{\alpha,n+i}\lambda_{i}$. Here
$\lambda_{i}$, $1\leq{i}\leq{n}$, are the eigenvalues of
$f^{\ast}g_{2}$. So, the spacelike condition on $M$ implies
$\lambda_{i}^{2}<1$ for each $1\leq{i}\leq{n}$. We list them
non-increasingly as
$\lambda_{1}^{2}\geq\lambda_{1}^{2}\geq\ldots\geq\lambda_{n}^{2}\geq0$.
By the classical Weyl's perturbation theorem, the ordering
eigenvalues $\lambda_{i}^{2}:\Sigma_{1}\rightarrow[0,1)$ is a
continuous and locally Lipschitz function. For each $p\in
\Sigma_{1}$, denote by $s=s(p)=\{1,2,\cdots,n\}$ the rank of $f$ at
the point $p$, which implies $\lambda_{s}^{2}>0$ and
$\lambda_{s+1}=\lambda_{s+2}=\cdots=\lambda_{n}=0$. Therefore, we
have $s\leq\min\{m,n\}$. In fact, after this setting, we have
$\lambda_{i\alpha}=\delta_{\alpha,n+i}=0$ if $i>s$, or $\alpha>n+s$.
Under the orthonormal basis $\{e_{1},\ldots,e_{n},\ldots,e_{n+m}\}$,
by (\ref{2.2})
 the hyperbolic angle $\theta$ satisfies
\begin{eqnarray*}
\mathrm{cosh}\theta=\frac{1}{\sqrt{\det(g_{1}-f^{\ast}g_{2})}}=\frac{1}{\sqrt{\prod^{n}_{i=1}(1-\lambda_{i}^{2})}}.
\end{eqnarray*}
Let
 \begin{eqnarray} \label{3.2}
\eta_{t}:=\max_{M_{t}}\mathrm{cosh}\theta.
\end{eqnarray}
 By \cite[Proposition 4.3]{gi}, we know that (\emph{in our case, $\Sigma_{1}=\Omega\subset\mathbb{R}^{n}$,
$\Sigma_{2}=\mathbb{R}^{m}$}) the evolution equation of
$\mathrm{cosh}\theta$ here should be
\begin{eqnarray*}
\frac{d}{dt}\ln(\mathrm{cosh}\theta)=\triangle{\ln(\mathrm{cosh}\theta)}-\left\{\|B\|^{2}-\sum_{k,i=1}^{n}\lambda_{i}^{2}\left(h_{ik}^{m+i}\right)^{2}-
2\sum_{k,i<j}\lambda_{i}\lambda_{j}h_{ik}^{m+j}h_{jk}^{m+i}\right\},
\end{eqnarray*}
where
$\|B\|^{2}=\sum_{i,j=1}^{n}\sum_{\alpha=n+1}^{n+m}(h_{ij}^{\alpha})^{2}$
is the squared norm of the second fundamental form. Hence, there
exists some nonnegative constant $\ell$ such that
 \begin{eqnarray*}
\frac{d}{dt}\ln(\mathrm{cosh}\theta)\leq\Delta{\ln(\mathrm{cosh}\theta)}-\ell\|B\|^{2}\leq\Delta{\ln(\mathrm{cosh}\theta)}.
\end{eqnarray*}
Then by the maximum principle for parabolic equations, we can obtain
\begin{eqnarray*}
\eta_{t}\leq\eta_{0}=\max_{M_{0}}\mathrm{cosh}\theta
\end{eqnarray*}
for $0<t\leq{T}\leq\infty$, which implies $\lambda_{i}^{2}(t)<1$ and
\begin{eqnarray*}
1-\lambda_{i}^{2}(t)\geq\prod^{n}_{i=1}\left(1-\lambda_{i}^{2}(t)\right)\geq\frac{1}{\eta_{t}^{2}}\geq\frac{1}{\eta_{0}^{2}}
\end{eqnarray*}
for any $0<t\leq{T}\leq\infty$. On the other hand, if we assume
\begin{eqnarray*}
4n\eta_{0}^{2}\delta\sup_{\Omega}|D^{2}\psi|+\sqrt{2}\sup_{\partial\Omega}|D\psi|<1,
\end{eqnarray*}
then by integrating along a path in $\Omega$, we have
$\sup_{\Omega}|Df_{0}|=\sup_{\partial\Omega}|D\psi|<1$ initially.
Hence, by the above arguments, we have $|Df_{t}|<1$ for any
$0\leq{t}\leq{T}\leq\infty$.

Step 3. By Theorem \ref{theorem2} (3) and (4), we know that the
norms of the second fundamental form and all of its derivatives are
bounded, and the flow (\ref{2.1}) exists for all time. Hence, the
solution to the spacelike MCF (\ref{2.4}) exists smoothly in
$[0,\infty)$.

Step 4. By \cite[Corollary 6.1]{gi}, we know that there exists a
time sequence $t_{n}\rightarrow\infty$ such that
$\sup_{\Sigma_{1}}\|H_{t_{n}}\|\rightarrow0$ when
$t_{n}\rightarrow\infty$. Since we also have a gradient bound (see
Theorem \ref{theorem2} (3)), we can extract a subsequence $t_{i}$
such that the graph $M_{t_{i}}$ converges to a Lipschitz graph with
$\sup\|H\|=0$ and $|Df|<1$, which implies that the limit submanifold
is a minimal spacelike submanifold.

Step 5. Interior regularity of the limit follows from \cite[Theorem
4.1]{m1}, since the singular values $\lambda_{i}$ of $Df$ satisfy
 $|\lambda_{i}\lambda_{j}|\leq1-1/\eta_{0}^{2}$ almost everywhere for any
$i\neq{j}$. Boundary regularity follows from \cite[Theorem
2.3]{hbr}. Our proof is finished. \hfill $\square$

\begin{remark} \rm{Clearly, our condition (\ref{1.1}) is better than
that in \cite[Theorem 1.1]{m1} provided $\eta_{0}=\max_{M_{0}}{\rm
cosh}\theta>\sqrt{2}$.}
\end{remark}

\section*{Acknowledgments}

  The author was supported by the starting-up research fund (Grant No. HIT(WH)201320) supplied
by Harbin Institute of Technology (Weihai), the project (Grant No.
HIT.NSRIF.2015101) supported by Natural Scientific Research
Innovation Foundation in Harbin Institute of Technology, and the NSF
of China (Grant No. 11401131). The author would like to thank the
anonymous referee for his or her careful reading and valuable
comments such that the article appears as its present version.

 \end{document}